\theoremstyle{plain} %text of this environment is typesetted in italics
\newtheorem{theorem}{\indent\sc Theorem}[section]
\newtheorem{lemma}[theorem]{\indent\sc Lemma}
\newtheorem{corollary}[theorem]{\indent\sc Corollary}
\theoremstyle{definition} %text of this environment is typesetted in roman letters
\newtheorem{remark}[theorem]{\indent\sc Remark}
\def\address#1#2{\begingroup
\noindent\parbox[t]{7.8cm}{%
\small{\scshape\ignorespaces#1}\par\vskip1ex
\noindent\small{\itshape E-mail address}%
\/: #2\par\vskip4ex}\hfill%
\endgroup}%
\title{Completely normal elements in finite abelian extensions} %title of the paper
\author{
\textsc{Ja Kyung Koo and Dong Hwa Shin} %names of authors
}
\date{} %leave empty
\begin{document}

\maketitle

%%%%%%%%%%%%%%% footnote %%%%%%%%%%%%%%%%
\footnote{ %2010 MSC numbers
2010 \textit{Mathematics Subject Classification}. 11F03, 11R18,
12F05. }
\footnote{ %key words and phrases
\textit{Key words and phrases}. cyclotomic extensions, modular
functions, normal bases. } \footnote{
\thanks{
This research was partially supported by Basic Science Research
Program through the NRF of Korea funded by MEST (2011-0001184). The
second named author was partially supported by TJ Park Postdoctoral
Fellowship.} }
%%%%%%%%%%%%%%%%%%%%%%%%%%%%%%%%%%%%%%%%%$$$$$$$$$$$$$$$$$$$$$$$$$$$$$$$$$$$$$$$$$$$$$$$$$$$$$$$$$$$$$$$$$$$$$$$$$$$$$$$$$$$$

\begin{abstract}
We give a completely normal element in the maximal real subfield of
a cyclotomic field over the field of rational numbers, which is
different from \cite{Okada1}. This result is a consequence of the
criterion for a normal element developed in \cite{J-K-S}.
Furthermore, we find a completely normal element in certain
extension of modular function fields in terms of a quotient of the
modular discriminant function.
\end{abstract}

\section{Introduction}

Let $L/K$ be a finite Galois extension. By the normal basis theorem
\cite{Waerden} there exists an element $x\in L$ such that
$\{x^\gamma~|~ \gamma\in\mathrm{Gal}(L/K)\}$ is a $K$-basis of $L$,
a so-called \textit{normal basis} of $L/K$. Such an element $x$ is
said to be \textit{normal} in $L/K$. Moreover, if $x$ is normal in
$L/F$ for every intermediate field $F$, then $x$ is said to be
\textit{completely normal} in $L/K$.  The existence of a completely
normal element was first proved by Blessenohl and Johnsen
\cite{B-J}.
\par
Throughout this paper we let $\zeta_\ell=e^{2\pi i/\ell}$ be a
primitive $\ell$th root of unity for a positive integer $\ell$.
Furthermore, we let $\mathbb{Q}(\zeta_\ell)^+$ be the maximal real
subfield of the $\ell$th cyclotomic field $\mathbb{Q}(\zeta_\ell)$.
\par
Okada \cite{Okada1} proved that if $k$ and $\ell$ ($>2$) are
positive integers with $k$ odd and $T$ is a set of representatives
for which $(\mathbb{Z}/\ell\mathbb{Z})^\times=T\cup(-T)$, then the
numbers $(1/\pi^k)(d/dz)^k(\cot \pi z)|_{z=a/\ell}$ for $a\in T$
form a normal basis of $\mathbb{Q}(\zeta_\ell)^+/\mathbb{Q}$, which
generalized the works of Chowla \cite{Chowla} when $k=1$. He
utilized the partial fractional decomposition of $(d/dz)^k(\cot \pi
z)$ and the Frobenius determinant relation \cite[Chapter 21 Theorem
5]{Lang}.
\par
On the other hand, let $\mathcal{N}$ be the set of positive integers
which are either odd or divisible by $4$. Let $\ell\in\mathcal{N}$.
Hachenberger \cite{Hachenberger} constructed an element
$w\in\mathbb{Q}(\zeta_\ell)$ which is simultaneously normal in
$\mathbb{Q}(\zeta_\ell)/\mathbb{Q}(\zeta_n)$ for each
$n\in\mathcal{N}$ dividing $\ell$. The main tool is the notion of
cyclic submodules in $\mathbb{Q}(\zeta_\ell)/\mathbb{Q}(\zeta_n)$
\cite{Hachenberger2}.
\par
Let $K$ be an imaginary quadratic field other than
$\mathbb{Q}(\sqrt{-1})$ and $\mathbb{Q}(\sqrt{-3})$. For an integer
$\ell$ ($\geq2$) let $K_{(\ell)}$ be the ray class field of $K$
modulo $\ell$. Recently, Jung et tal \cite{J-K-S} showed that the
singular value of a Siegel function is normal in $K_{(\ell)}/K$. To
this end, they derived a criterion to determine a normal element in
a finite abelian extension of number fields from the Frobenius
determinant relation.
\par
Actually the criterion can be extended to determine a completely
normal element in a finite abelian extension (Theorem
\ref{criterion}). In this paper we shall give a completely normal
element in $\mathbb{Q}(\zeta_\ell)^+/\mathbb{Q}$ for an integer
$\ell$ ($\geq5$) by using the criterion (Theorems \ref{maximal} and
\ref{maximal2}). The element is expressed in terms of the cosine
function, which is simple and totally different from that of
\cite{Okada1}. Furthermore, we shall find a completely normal
element in certain extension of modular function fields in terms of
a quotient of the modular discriminant function (Theorem
\ref{modular}).

\section{A criterion for completely normal elements}

Throughout this section we let $L/K$ be a finite abelian extension
of degree $n$ ($\geq2$) with $G=\mathrm{Gal}(L/K)$. Furthermore, we
let $|\cdot|$ be a valuation  on $L$. Then $|\cdot|$ satisfies the
triangle inequality, namely, $|x+y|\leq|x|+|y|$ for all $x,y\in L$.
It follows that
\begin{equation}\label{triangle1}
|x|-|y|\leq |x+y|\quad\textrm{for all $x,y\in L$}.
\end{equation}
In particular, if $|\cdot|$ is nonarchimedean, then
$|x+y|\leq\max\{|x|,|y|\}$, from which one can readily deduce that
\begin{equation}\label{triangle2}
|x|^m-|y|^m\leq|x+y|^m\quad\textrm{for all $x,y\in L$ and any
positive real number $m$}
\end{equation}
\cite[Chapter II $\S$1]{Janusz}.

\begin{lemma}\label{character}
An element $x\in L$ is normal in $L/K$ if and only if
\begin{equation*}
\sum_{\gamma\in G}{\chi}(\gamma^{-1})x^{\gamma}\neq0
\quad\textrm{for all characters $\chi$ on $G$}.
\end{equation*}
\end{lemma}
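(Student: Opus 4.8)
The plan is to reduce the normality of $x$ to the nonvanishing of a single group determinant and then to factor that determinant by the Frobenius relation recalled in the introduction. First I would observe that, since $[L:K]=n=|G|$, the set $\{x^\gamma\mid\gamma\in G\}$ consists of exactly $n$ elements, so $x$ is normal if and only if these conjugates are linearly independent over $K$. Thus the entire statement becomes a linear-independence criterion for the $n$ conjugates of a single element.

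The second step is the classical determinant criterion: the conjugates $\{x^\gamma\}$ are $K$-linearly independent if and only if $\det\!\big(x^{\sigma\tau}\big)_{\sigma,\tau\in G}\neq0$. One implication is immediate, because a nontrivial relation $\sum_\gamma c_\gamma x^\gamma=0$ with $c_\gamma\in K$ yields, after applying each $\sigma\in G$, the relations $\sum_\gamma c_\gamma x^{\sigma\gamma}=0$, so the columns of the matrix are dependent and the determinant vanishes.

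The third step applies the Frobenius determinant relation. After a harmless permutation of the columns ($\tau\mapsto\tau^{-1}$), which changes the determinant only by a sign, and using that the characters of the abelian group $G$ are its $n$ one-dimensional characters, one obtains
$$\det(x^{\sigma\tau})_{\sigma,\tau} = \pm \prod_\chi \Big(\sum_{\gamma\in G}\chi(\gamma^{-1})x^\gamma\Big),$$
the product running over all characters $\chi$ of $G$. Since each factor lies in the field $L(\zeta_n)$, which is an integral domain, the product is nonzero precisely when every factor is nonzero, and this is exactly the asserted condition.

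The main obstacle is the converse half of the determinant criterion: a vanishing determinant only shows that the conjugates are dependent over $L$, so one must descend to a dependence over $K$. Concretely, $\det(x^{\sigma\tau})=0$ produces coefficients $a_\tau\in L$, not all zero, with $\sum_\tau a_\tau x^{\sigma\tau}=0$ for every $\sigma\in G$. To descend I would pick such a relation with the fewest nonzero $a_\tau$, scale so that one coefficient equals $1$, apply an arbitrary $\rho\in G$ and re-index (using that $\rho\sigma$ again ranges over $G$), and subtract the two relations: the normalized coefficient cancels, yielding a strictly shorter relation, so by minimality $a_\tau=\rho(a_\tau)$ for all $\tau$ and all $\rho$. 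Hence every $a_\tau\in K$, and taking $\sigma$ to be the identity gives the sought $K$-linear dependence of the $x^\tau$, completing the equivalence.
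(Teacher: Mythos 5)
Your proof is correct, and it follows essentially the same route as the paper: the paper disposes of this lemma by citing \cite[Proposition 2.3]{J-K-S}, which (as the introduction explains) is itself derived from the Frobenius determinant relation \cite[Chapter 21 Theorem 5]{Lang}, exactly your key step. Your reduction to the nonvanishing of $\det(x^{\sigma\tau})$, the minimal-support descent argument showing an $L$-linear dependence of the conjugates forces a $K$-linear one, and the factorization of the determinant over the characters of the abelian group $G$ are all sound, so you have in effect supplied the details that the paper outsources to the reference.
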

\begin{proof}
\cite[Proposition 2.3]{J-K-S}.
\end{proof}

\begin{theorem}\label{criterion}
Assume that there exists an element $x\in L$ such that
\begin{equation*}
|x^{\gamma}/x|<1\quad\textrm{for all $\gamma\in G-\{\mathrm{Id}\}$}.
\end{equation*}
Let $m$ be any positive integer such that
\begin{equation}\label{smaller}
|x^{\gamma}/x|^m\leq 1/n\quad\textrm{for all $\gamma\in
G-\{\mathrm{Id}\}$}.
\end{equation}
Then $x^m$ is completely normal in $L/K$. In particular, if
$|\cdot|$ is nonarchimedean, then any positive power of $x$ is
completely normal in $L/K$.
\end{theorem}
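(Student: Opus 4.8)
The plan is to establish complete normality of $x^m$ directly from the character criterion of Lemma \ref{character}. Since $L/K$ is abelian, each intermediate field $F$ corresponds to a subgroup $H=\mathrm{Gal}(L/F)\leq G$, and $L/F$ is again a finite abelian extension of degree $|H|\leq n$. Thus it suffices to prove, for every such $H$ and every character $\chi$ on $H$, that
\[
S_{H,\chi}=\sum_{\gamma\in H}\chi(\gamma^{-1})(x^m)^\gamma\neq 0,
\]
after which Lemma \ref{character} (applied to $L/F$ in place of $L/K$) yields normality of $x^m$ in $L/F$, and hence complete normality in $L/K$.

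Before estimating $S_{H,\chi}$ I would dispose of one technical point: the character values $\chi(\gamma^{-1})$ are roots of unity that a priori lie in $\mathbb{C}$ rather than in $L$, so I extend $|\cdot|$ to a field containing $L$ together with all these values. Because any root of unity $\zeta$ satisfies $\zeta^k=1$ for some $k$ and therefore $|\zeta|=1$, every character value has absolute value $1$ under the extended valuation, regardless of whether $|\cdot|$ is archimedean or not. Note also that $x\neq 0$, since $|x^\gamma/x|<1$ presupposes $x$ is invertible.

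The heart of the argument isolates the identity term $\chi(\mathrm{Id})\,x^m=x^m$, of absolute value $|x|^m$, from the remaining $|H|-1$ terms. Applying the reverse triangle inequality \eqref{triangle1} repeatedly and using $|\chi(\gamma^{-1})|=1$, I obtain
\[
|S_{H,\chi}|\;\geq\;|x|^m-\sum_{\gamma\in H-\{\mathrm{Id}\}}|x^\gamma|^m
=|x|^m\Bigl(1-\sum_{\gamma\in H-\{\mathrm{Id}\}}|x^\gamma/x|^m\Bigr).
\]
By hypothesis \eqref{smaller} each summand is at most $1/n$, and there are $|H|-1\leq n-1$ of them, so the parenthesized factor is at least $1-(n-1)/n=1/n>0$. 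Hence $|S_{H,\chi}|\geq |x|^m/n>0$, forcing $S_{H,\chi}\neq 0$ as required.

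For the nonarchimedean refinement I would replace \eqref{triangle1} by the ultrametric inequality underlying \eqref{triangle2}. Given any positive integer $r$, the identity term $x^r$ has absolute value $|x|^r$, while every other term $\chi(\gamma^{-1})(x^\gamma)^r$ has absolute value $|x^\gamma|^r=|x^\gamma/x|^r\,|x|^r<|x|^r$ because $|x^\gamma/x|<1$. Thus the identity term strictly dominates all others, and the ultrametric property forces $|S_{H,\chi}|=|x|^r\neq 0$ with no counting and no constraint on $r$; consequently \emph{every} positive power of $x$ is completely normal. The only genuinely delicate step is the valuation-theoretic bookkeeping of the second paragraph—incorporating the character values consistently so that \eqref{triangle1} and \eqref{triangle2} may legitimately be applied to $S_{H,\chi}$—after which the estimates are entirely routine.
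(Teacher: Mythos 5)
Your proof is correct. For the main claim it is essentially the paper's own argument: you reduce to an arbitrary intermediate field $F$ with $H=\mathrm{Gal}(L/F)$, apply Lemma \ref{character} to $L/F$, split off the identity term of the character sum, and control the remaining $|H|-1\leq n-1$ terms via the reverse triangle inequality (\ref{triangle1}) together with hypothesis (\ref{smaller}); your final bound $|x|^m/n$ is just the paper's bound $|x^m|(n-[L:F]+1)/n$ after using $[L:F]\leq n$. The genuine divergence is in the nonarchimedean refinement. There the paper keeps the same template: it raises the character sum of $x^t$ to the $m$-th power, applies the ultrametric inequality (\ref{triangle2}), and repeats the counting with $(1/n)^t$ in place of $1/n$, so its argument still routes through an exponent $m$ satisfying (\ref{smaller}). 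You instead invoke the standard ultrametric principle that a sum containing a unique term of strictly maximal absolute value has exactly that absolute value; since $|\chi(\gamma^{-1})(x^{\gamma})^r|=|x^{\gamma}|^r<|x|^r$ for $\gamma\neq\mathrm{Id}$, the sum is nonzero for every positive integer $r$, with no counting and no reference to (\ref{smaller}) at all. This is cleaner and makes transparent why the ultrametric case needs only the hypothesis $|x^{\gamma}/x|<1$. A further merit of your write-up is the observation that the character values are roots of unity which need not lie in $L$, so $|\cdot|$ must first be extended to a field containing them (where such values automatically have absolute value $1$, and where nonarchimedean-ness is preserved); the paper applies (\ref{triangle1}) and (\ref{triangle2}) to these sums without comment, so this is a legitimate, if routine, gap-fill.
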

\begin{proof}
Let $F$ be an intermediate field of $L/K$ with $\ell=[L:F]$ and
$H=\mathrm{Gal}(L/F)$ ($\leq G$). For any character $\chi$ on $H$ we
find that
\begin{eqnarray*}
|\sum_{\gamma\in H}{\chi}(\gamma^{-1})(x^m)^{\gamma}| &\geq&
|x^m|(1-\sum_{\gamma\in
H-\{\mathrm{Id}\}}|(x^m)^{\gamma}/x^m|)\quad\textrm{by (\ref{triangle1})}\\
&\geq&|x^m|(1-(1/n)(\ell-1))\quad\textrm{by (\ref{smaller})}\\
&=&|x^m|(n-\ell+1)/n\\&>&0\quad\textrm{because $\ell\leq n$}.
\end{eqnarray*}
This shows that $x^m$ is normal in $L/F$ by Lemma \ref{character};
and hence $x^m$ is completely normal in $L/K$. Furthermore, if
$|\cdot|$ is nonarchimedean, then we derive for any positive integer
$t$ that
\begin{eqnarray*}
|\sum_{\gamma\in H}{\chi}(\gamma^{-1})(x^t)^{\gamma}|^m &\geq&
|x^t|^m(1-\sum_{\gamma\in
H-\{\mathrm{Id}\}}|(x^t)^{\gamma}/x^t|^m)\quad\textrm{by (\ref{triangle2})}\\
&\geq&|x^t|^m(1-(1/n)^t(\ell-1))\quad\textrm{by (\ref{smaller})}\\
&\geq&|x^t|^m(1-(1/n)(\ell-1))\\
&=&|x^t|^m(n-\ell+1)/n\\&>&0\quad\textrm{because $\ell\leq n$}.
\end{eqnarray*}
Hence $x^t$ is completely normal in $L/K$ again by Lemma
\ref{character}. This completes the proof.
\end{proof}

\begin{corollary}\label{Cor}
Let $L/K$ be an abelian extension of number fields.
Assume that there exists an element $x\in L$ such that
\begin{itemize}
\item[\textup{(i)}] $x$ is an algebraic integer,
\item[\textup{(ii)}] $L=K(x)$,
\item[\textup{(iii)}] $x^\gamma$ are real for all $\gamma\in G$.
\end{itemize}
Let $a$ and $b$ be nonzero integers such that $2<|a/b|$, where $|\cdot|$
is the usual absolute value on $\mathbb{C}$. Then, a high power of $ax+b$ is completely normal in $L/K$.
\end{corollary}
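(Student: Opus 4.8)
The plan is to apply Theorem \ref{criterion} to $w=ax+b$ itself, after choosing a suitable archimedean valuation on $L$. First I would record the structure the hypotheses force. Since $x$ is an algebraic integer with $L=K(x)$ and $[L:K]=n$, the conjugates $\{x^\gamma:\gamma\in G\}$ are exactly the $n$ distinct roots of the minimal polynomial of $x$ over $K$, and by (iii) they are all real. Because $a,b\in\mathbb{Z}\subseteq K$, the numbers $c_\gamma:=(ax+b)^\gamma=a x^\gamma+b$ for $\gamma\in G$ are then $n$ distinct real numbers.

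Next I would locate a valuation under which $w$ strictly dominates all of its conjugates. The key claim is that $\max_{\gamma\in G}|c_\gamma|$ is attained at a \emph{unique} index $\gamma_1$. Granting this, define $|\cdot|_v$ on $L$ by $|z|_v=|\gamma_1(z)|$, where $|\cdot|$ is the usual absolute value on $L\subseteq\mathbb{C}$; since $\gamma_1\in G$ is a field automorphism, $|\cdot|_v$ is an (archimedean) absolute value on $L$. For every $\gamma\in G-\{\mathrm{Id}\}$ one computes $|w^\gamma/w|_v=|c_{\gamma_1\gamma}|/|c_{\gamma_1}|<1$, the inequality being strict because $\gamma_1\gamma\neq\gamma_1$ and $\gamma_1$ is the unique maximizer (here $|c_{\gamma_1}|>0$, as the $c_\gamma$ cannot all vanish: otherwise all $x^\gamma$ would equal $-b/a$, contradicting their distinctness for $n\geq2$). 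Theorem \ref{criterion} then yields that $w^m=(ax+b)^m$ is completely normal in $L/K$ for every positive integer $m$ with $|w^\gamma/w|_v^m\leq 1/n$; since each ratio is $<1$ and $G$ is finite, all sufficiently large $m$ qualify, which is the asserted high power.

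The heart of the argument, and the only place the hypothesis $2<|a/b|$ is used, is the uniqueness claim, which I expect to be the main obstacle. Suppose $|c_\gamma|$ were maximal at two distinct indices $\gamma_1\neq\gamma_2$. As $c_{\gamma_1}$ and $c_{\gamma_2}$ are distinct real numbers of equal absolute value, they must be negatives of each other, so $c_{\gamma_1}+c_{\gamma_2}=0$ and hence $x^{\gamma_1}+x^{\gamma_2}=-2b/a$. The left-hand side is a sum of conjugates of the algebraic integer $x$, hence itself an algebraic integer; being equal to the rational number $-2b/a$, it must then be a rational integer. But $0<|2b/a|<1$ because $b\neq0$ and $|a/b|>2$, which forces $-2b/a=0$ and therefore $b=0$, contrary to hypothesis. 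This integrality-versus-size dichotomy rules out a tie, establishes the uniqueness of $\gamma_1$, and completes the plan, the remainder being a direct invocation of Theorem \ref{criterion}.
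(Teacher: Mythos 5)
Your proposal is correct, and it hinges on exactly the same key lemma as the paper: the tie-breaking argument that $2<|a/b|$, the realness of the conjugates, and the integrality of $x^{\gamma}+x^{\delta}$ together force the values $|ax^\gamma+b|$, $\gamma\in G$, to be pairwise distinct. Where you diverge is in how that uniqueness is fed into Theorem \ref{criterion}. The paper keeps the standard absolute value on $\mathbb{C}$ throughout and, for \emph{each} intermediate field $F$, applies Theorem \ref{criterion} to the extension $L/F$ with the dominant conjugate $ax^{\gamma_F}+b$ (where $\gamma_F$ maximizes $|ax^\gamma+b|$ over $\mathrm{Gal}(L/F)$); this gives normality of $(ax^{\gamma_F}+b)^{m_F}$ in $L/F$, which is then transferred back to $(ax+b)^{m_F}$ by applying $\gamma_F^{-1}$, using that any Galois conjugate of a normal element is again normal; finally one takes $m=\max_F m_F$ over the finitely many intermediate fields. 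You instead exploit the freedom in Theorem \ref{criterion} to choose the valuation: twisting the usual absolute value by the unique global maximizer $\gamma_1\in G$ (i.e.\ $|z|_v=|\gamma_1(z)|$) makes $w=ax+b$ itself the strictly dominant element, so a \emph{single} application of the theorem to $L/K$ yields complete normality of $(ax+b)^m$ directly, with no per-field bookkeeping, no conjugation-transfer step, and no maximum over intermediate fields. Your route is shorter and arguably cleaner; the paper's route never leaves the standard absolute value and makes the (independently useful) transfer principle explicit. Both arguments correctly use finiteness to choose the exponent $m$, and your parenthetical check that $|c_{\gamma_1}|>0$ closes the only small gap a reader might worry about.
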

\begin{proof}
Suppose that there exist distinct elements $\gamma$ and $\delta$ of $G$ such that
$|ax^\gamma+b|=|ax^\delta+b|$. Since $x^\gamma$ and $x^\delta$ are real by the assumption (iii), we get
$ax^\gamma+b=\pm(ax^\delta+b)$. Moreover, since $x^\gamma\neq x^\delta$ by the assumption (ii) and the fact
$\gamma\neq\delta$, we obtain $ax^\gamma+b=-(ax^\delta+b)$, from which it follows that
$x^\gamma+x^\delta=-2b/a$. Note that
$x^\gamma+x^\delta$ is an algebraic integer by the assumption (i), but $-2b/a$ is a rational number such that $0<|2b/a|<1$, which
yields a contradiction.
\par
For each intermediate field $F$ of $L/K$ with $[L:F]\geq2$, the
preceding argument shows that there is a unique element $\gamma_F$
of $\mathrm{Gal}(L/F)$ and a positive integer $m_F$ such that
$|(ax^\gamma+b)/(ax^{\gamma_F}+b)|^{m_F}\leq 1/[L:F]$ for all
$\gamma\in\mathrm{Gal}(L/F)-\{\gamma_F\}$. If we set
$m=\max\{m_F|F~\textrm{}\}$, then we get from Theorem
\ref{criterion} that $(ax^{\gamma_F}+b)^m$ is completely normal in
$L/F$. In particular, the set
$\{((ax^{\gamma_F}+b)^m)^\gamma|\gamma\in\mathrm{Gal}(L/F)\}$ is a
normal basis of $L$ over $F$; and hence
$((ax^{\gamma_F}+b)^m)^{\gamma_F^{-1}}= (ax+b)^m$ is normal in
$L/F$. This implies that $(ax+b)^m$ is completely normal in $L/K$
because $F$ is arbitrary. This completes the proof.
\end{proof}

\begin{remark}
If $L/K$ is an abelian extension of totally real number fields, then
there always exists such an element $x\in L$ which satisfies the assumptions of Corollary \ref{Cor}.
\end{remark}

\section{Maximal real subfields of cyclotomic fields}

Let $\ell$ be a positive integer. As is well-known,
$\mathbb{Q}(\zeta_\ell)^+=\mathbb{Q}(\zeta_\ell+\zeta_\ell^{-1})$
and
$\mathrm{Gal}(\mathbb{Q}(\zeta_\ell)^+/\mathbb{Q})\simeq(\mathbb{Z}/\ell\mathbb{Z})^\times/\{\pm1\}$,
whose actions are given as follows: if
$t\in(\mathbb{Z}/\ell\mathbb{Z})^\times/\{\pm1\}$, then
$(\zeta_\ell+\zeta_\ell^{-1})^t=\zeta_\ell^t+\zeta_\ell^{-t}$.
Denote the number of positive integers relatively prime to $\ell$ by
$\phi(\ell)$. Then we have
\begin{equation*}
[\mathbb{Q}(\zeta_\ell)^+:\mathbb{Q}]=\left\{\begin{array}{ll}1, &
\textrm{if}~\ell=1,2,3,4,6,\\
\phi(\ell)/2~(\geq2),&\textrm{otherwise}
\end{array}\right.
\end{equation*}
\cite[Chapter 2]{Washington}.
\par
Let $|\cdot|$ denote the usual absolute value on $\mathbb{C}$.

\begin{theorem}\label{maximal}
Let $\ell$ \textup{($\neq1,2,3,4,6$)} be a positive integer. If $m$
is any positive integer such that
\begin{equation*}
((\cos(4\pi/\ell)+1)/(\cos(2\pi/\ell)+1))^m\leq2/\phi(\ell),
\end{equation*}
then $(\cos(2\pi/\ell)+1)^m$ is completely normal in
$\mathbb{Q}(\zeta_\ell)^+/\mathbb{Q}$.
\end{theorem}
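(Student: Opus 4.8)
The plan is to apply Theorem \ref{criterion} directly with $L=\mathbb{Q}(\zeta_\ell)^+$, $K=\mathbb{Q}$, degree $n=[\mathbb{Q}(\zeta_\ell)^+:\mathbb{Q}]=\phi(\ell)/2$, and the archimedean valuation $|\cdot|$, taking as the distinguished element
\begin{equation*}
x=\cos(2\pi/\ell)+1.
\end{equation*}
First I would record that $x$ lies in $\mathbb{Q}(\zeta_\ell)^+$, since $\cos(2\pi/\ell)=(\zeta_\ell+\zeta_\ell^{-1})/2$, and that by the double-angle identity $\cos 2\theta+1=2\cos^2\theta$ we may rewrite $x=2\cos^2(\pi/\ell)$, which is strictly positive because $0<\pi/\ell<\pi/2$. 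Using the description $\mathrm{Gal}(\mathbb{Q}(\zeta_\ell)^+/\mathbb{Q})\simeq(\mathbb{Z}/\ell\mathbb{Z})^\times/\{\pm1\}$ with action $(\zeta_\ell+\zeta_\ell^{-1})^t=\zeta_\ell^t+\zeta_\ell^{-t}$ recalled above, the conjugate of $x$ by $t$ is $x^t=\cos(2\pi t/\ell)+1=2\cos^2(\pi t/\ell)\geq 0$.

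Next I would fix representatives for $(\mathbb{Z}/\ell\mathbb{Z})^\times/\{\pm1\}$ lying in $\{1,2,\dots,\lfloor\ell/2\rfloor\}$, so that for each such $t$ the angle $\pi t/\ell$ runs through $(0,\pi/2]$. On this interval $\cos^2$ is strictly decreasing, whence
\begin{equation*}
\frac{x^t}{x}=\frac{\cos^2(\pi t/\ell)}{\cos^2(\pi/\ell)}<1\qquad(t\neq1),
\end{equation*}
which supplies the strict-inequality hypothesis of Theorem \ref{criterion}. The heart of the matter is then to bound this ratio uniformly by its value at ``$t=2$'': because the smallest admissible representative exceeding $1$ is always at least $2$, monotonicity of $\cos^2$ gives $\cos^2(\pi t/\ell)\leq\cos^2(2\pi/\ell)$ for every representative $t\neq1$, and hence
\begin{equation*}
\frac{x^t}{x}\leq\frac{\cos^2(2\pi/\ell)}{\cos^2(\pi/\ell)}=\frac{\cos(4\pi/\ell)+1}{\cos(2\pi/\ell)+1},
\end{equation*}
the final equality again by the double-angle formula. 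Note that, since $\pi/\ell<2\pi/\ell\leq\pi/2$ for $\ell\geq5$, this common bound itself lies in $(0,1)$.

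Finally, combining this bound with the hypothesis on $m$, I obtain $|x^t/x|^m\leq 2/\phi(\ell)=1/n$ for all $t\neq1$, which is precisely condition (\ref{smaller}) of Theorem \ref{criterion}; that theorem then yields that $x^m=(\cos(2\pi/\ell)+1)^m$ is completely normal in $\mathbb{Q}(\zeta_\ell)^+/\mathbb{Q}$. I expect the only genuine subtlety to be the uniform bound in the second paragraph, specifically the case in which $\ell$ is even and $t=2$ is \emph{not} itself a Galois element: there the true maximizing representative is some odd $t\geq3$, but since $\pi t/\ell\geq 2\pi/\ell$ and $\ell\geq5$ keeps $\pi t/\ell$ within $(0,\pi/2]$, the value $\cos^2(2\pi/\ell)$ still dominates and the displayed inequality persists. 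Everything else is double-angle bookkeeping together with a direct appeal to Theorem \ref{criterion}; in particular, no primitivity of $x$ is needed, as Theorem \ref{criterion} rests only on the size inequalities through Lemma \ref{character}.
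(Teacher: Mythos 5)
Your proof is correct and takes essentially the same route as the paper's: choose $x=\cos(2\pi/\ell)+1$, note that every nontrivial conjugate is $\cos(2t\pi/\ell)+1$ with a representative $1<t\leq[\ell/2]$ coprime to $\ell$, bound the ratio $|x^\gamma/x|$ uniformly by $(\cos(4\pi/\ell)+1)/(\cos(2\pi/\ell)+1)$ via monotonicity, and apply Theorem \ref{criterion}. The only cosmetic difference is that you phrase the monotonicity through the double-angle identity $\cos 2\theta+1=2\cos^2\theta$, while the paper bounds $\cos(2t\pi/\ell)+1\leq\cos(4\pi/\ell)+1$ directly; both are the same estimate.
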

\begin{proof}
Let $x=(\zeta_\ell+\zeta_\ell^{-1})/2+1=\cos(2\pi/\ell)+1$. If
$\gamma\in
\mathrm{Gal}(\mathbb{Q}(\zeta_\ell)^+/\mathbb{Q})-\{\mathrm{Id}\}$,
then $x^\gamma=(\zeta^t+\zeta^{-t})/2+1$ for some integer $t$ with
$\gcd(\ell,t)=1$ and $1<t\leq[\ell/2]$, where $[\cdot]$ is the Gauss
symbol. We achieve that
\begin{eqnarray*}
|x^\gamma/x|&=&|((\zeta^t+\zeta^{-t})/2+1)/((\zeta+\zeta^{-1})/2+1)|\\
&=&|(\cos(2t\pi/\ell)+1)/(\cos(2\pi/\ell)+1)|\\
&\leq&|(\cos(4\pi/\ell)+1)/(\cos(2\pi/\ell)+1)|,
\end{eqnarray*}
which is less than $1$. The result follows from Theorem
\ref{criterion}.
\end{proof}

\begin{theorem}\label{maximal2}
Let $\ell$ \textup{($\geq5$)} be an odd integer. If $m$ is any
positive integer such that
\begin{equation*}
(\cos(2\pi/\ell)/\cos(\pi/\ell))^m\leq 2/\phi(\ell),
\end{equation*}
then $\cos^m(\pi/\ell)$ is completely normal in
$\mathbb{Q}(\zeta_\ell)^+/\mathbb{Q}$.
\end{theorem}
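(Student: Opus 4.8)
The plan is to mirror the proof of Theorem \ref{maximal}, taking $x=\cos(\pi/\ell)$ and applying Theorem \ref{criterion}. First I would realize $x$ as a real element of the field: since $\ell$ is odd we have $\mathbb{Q}(\zeta_{2\ell})=\mathbb{Q}(\zeta_2,\zeta_\ell)=\mathbb{Q}(\zeta_\ell)$, whence $\mathbb{Q}(\zeta_\ell)^+=\mathbb{Q}(\zeta_{2\ell})^+=\mathbb{Q}(\zeta_{2\ell}+\zeta_{2\ell}^{-1})$. Because $\cos(\pi/\ell)=(\zeta_{2\ell}+\zeta_{2\ell}^{-1})/2$, the element $x$ lies in $\mathbb{Q}(\zeta_\ell)^+$, and it is positive since $0<\pi/\ell<\pi/2$.

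Next I would describe the Galois action through the isomorphism $\mathrm{Gal}(\mathbb{Q}(\zeta_{2\ell})^+/\mathbb{Q})\simeq(\mathbb{Z}/2\ell\mathbb{Z})^\times/\{\pm1\}$, under which $x^\gamma=\cos(s\pi/\ell)$ for an integer $s$ coprime to $2\ell$. Choosing representatives, every nontrivial $\gamma$ corresponds to an odd integer $s$ with $\gcd(s,\ell)=1$ and $3\leq s\leq\ell-2$ (the value $s=\ell-1$ being even, and $s=\ell$ not coprime to $\ell$, are excluded by the parity and coprimality constraints). Note that $\phi(2\ell)=\phi(\ell)$, so the degree is $n=\phi(\ell)/2$, matching the quantity $1/n=2/\phi(\ell)$ appearing in \eqref{smaller}.

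The crux is to verify that $\cos(2\pi/\ell)/\cos(\pi/\ell)$ is exactly $\max_{\gamma\neq\mathrm{Id}}|x^\gamma/x|$, and in particular is $<1$. For $s$ in the above range, $s\pi/\ell$ lies strictly between $0$ and $\pi$, so $|\cos(s\pi/\ell)|$ equals the cosine of the distance from $s\pi/\ell$ to the nearer of $0$ and $\pi$. These two distances are $s\pi/\ell\geq3\pi/\ell$ and $(\ell-s)\pi/\ell\geq2\pi/\ell$ respectively, hence both are at least $2\pi/\ell$; since $2\pi/\ell\leq\pi/2$ for odd $\ell\geq5$ and $\cos$ is decreasing there, we get $|\cos(s\pi/\ell)|\leq\cos(2\pi/\ell)$, with equality attained at $s=\ell-2$. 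The latter is a genuine nontrivial element once $\ell\geq5$, as $\ell-2\equiv\pm1\pmod{2\ell}$ would force $\ell\in\{1,3\}$. Moreover $\cos(2\pi/\ell)<\cos(\pi/\ell)$, so every ratio $|x^\gamma/x|$ is indeed $<1$.

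Finally, the hypothesis $(\cos(2\pi/\ell)/\cos(\pi/\ell))^m\leq2/\phi(\ell)$ says precisely that $|x^\gamma/x|^m\leq1/n$ for every $\gamma\neq\mathrm{Id}$, which is condition \eqref{smaller} of Theorem \ref{criterion}. Invoking that theorem then yields that $x^m=\cos^m(\pi/\ell)$ is completely normal in $\mathbb{Q}(\zeta_\ell)^+/\mathbb{Q}$. I expect the only delicate point to be the trigonometric estimate identifying $s=\ell-2$, rather than $s=3$, as the maximizer of $|\cos(s\pi/\ell)|$; the passage to $\zeta_{2\ell}$ and the bookkeeping of admissible residues is routine but must be handled carefully on account of the parity restriction that $s$ be odd.
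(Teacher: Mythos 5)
Your proof is correct and takes essentially the same approach as the paper: both arguments apply Theorem \ref{criterion} to the element $x=\cos(\pi/\ell)$ and bound every nontrivial conjugate ratio by $\cos(2\pi/\ell)/\cos(\pi/\ell)<1$. The only difference is bookkeeping: the paper stays inside $\mathbb{Q}(\zeta_\ell)$ by writing $\cos(\pi/\ell)=-(\zeta_\ell^{(\ell-1)/2}+\zeta_\ell^{-(\ell-1)/2})/2$ and indexing conjugates by $t$ with $1\leq t<(\ell-1)/2$, whereas you pass to $\zeta_{2\ell}$ and index by odd residues $s$ modulo $2\ell$; the two parameterizations are related by $s=\ell-2t$, so the key trigonometric estimate is identical.
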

\begin{proof}
Let $x=-(\zeta^{(\ell-1)/2}+\zeta^{-(\ell-1)/2})/2$. Since
$1\cdot\ell+(-2)\cdot(\ell-1)/2=1$, we get
$\gcd(\ell,(\ell-1)/2)=1$, which implies that $x$ is a conjugate of
$-(\zeta+\zeta^{-1})/2$. Hence, if
$\gamma\in\mathrm{Gal}(\mathbb{Q}(\zeta_\ell)^+/\mathbb{Q})-\{\mathrm{Id}\}$,
then $x^\gamma=-(\zeta^t+\zeta^{-t})/2$ for some integer $t$ with
$\gcd(\ell,t)=1$ and $1\leq t<(\ell-1)/2$. We find that
\begin{eqnarray*}
|x^\gamma/x|&=&|(-(\zeta^t+\zeta^{-t})/2)/(-(\zeta^{(\ell-1)/2}+\zeta^{-(\ell-1)/2})/2)|\\
&=&|-\cos(2t\pi/\ell)/-\cos(\pi-\pi/\ell)|\\
&=&|\cos(2t\pi/\ell)|/\cos(\pi/\ell)\\
&\leq&\cos(2\pi/\ell)/\cos(\pi/\ell),
\end{eqnarray*}
which is less than $1$. We obtain the assertion by Theorem
\ref{criterion}.
\end{proof}

\begin{lemma}\label{composite}
Let $L_1$ and $L_2$ be finite Galois extensions of a number field
$K$ such that $L_1\cap L_2=K$. If $x_k\in L_k$ is normal in $L_k/K$
\textup{($k=1,2$)}, then $x_1x_2$ is normal in $L_1L_2/K$.
\end{lemma}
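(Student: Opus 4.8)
The plan is to exploit the product structure of the Galois group of the compositum. Write $G_k=\mathrm{Gal}(L_k/K)$ for $k=1,2$ and $G=\mathrm{Gal}(L_1L_2/K)$. Since $L_1/K$ and $L_2/K$ are Galois with $L_1\cap L_2=K$, the compositum $L_1L_2/K$ is Galois and the restriction map $\sigma\mapsto(\sigma|_{L_1},\sigma|_{L_2})$ is an isomorphism $G\xrightarrow{\sim}G_1\times G_2$; in particular $[L_1L_2:K]=[L_1:K][L_2:K]=|G_1||G_2|$. First I would record the elementary but crucial observation that, for $\sigma\in G$ corresponding to $(\gamma_1,\gamma_2)\in G_1\times G_2$, we have $(x_1x_2)^\sigma=x_1^{\sigma|_{L_1}}x_2^{\sigma|_{L_2}}=x_1^{\gamma_1}x_2^{\gamma_2}$, because $\sigma$ acts on $x_1\in L_1$ through its restriction $\gamma_1$ and on $x_2\in L_2$ through $\gamma_2$. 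Hence the orbit $\{(x_1x_2)^\sigma~|~\sigma\in G\}$ is precisely the set of products $\{x_1^{\gamma_1}x_2^{\gamma_2}~|~(\gamma_1,\gamma_2)\in G_1\times G_2\}$.

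Next I would reduce the claim to the statement that the products of a $K$-basis of $L_1$ with a $K$-basis of $L_2$ form a $K$-basis of $L_1L_2$. By hypothesis $\{x_1^{\gamma_1}\}_{\gamma_1\in G_1}$ and $\{x_2^{\gamma_2}\}_{\gamma_2\in G_2}$ are $K$-bases of $L_1$ and $L_2$, so it suffices to show their $|G_1||G_2|$ products are linearly independent over $K$; since this number equals $[L_1L_2:K]$, independence is equivalent to spanning, and hence to being a basis. I would obtain independence from the linear disjointness of $L_1$ and $L_2$ over $K$ (equivalently, the natural multiplication map $L_1\otimes_K L_2\twoheadrightarrow L_1L_2$ is an isomorphism), which follows from $L_1\cap L_2=K$ together with the Galois hypothesis. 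Combined with the orbit identification above, this shows $\{(x_1x_2)^\sigma\}$ is a $K$-basis of $L_1L_2$, that is, $x_1x_2$ is normal.

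In the abelian case relevant to the applications one can instead argue purely through the character criterion of Lemma \ref{character}, which I find cleaner: every character $\chi$ of $G\cong G_1\times G_2$ splits as $\chi=\chi_1\chi_2$ with $\chi_k$ a character of $G_k$, so
\[
\sum_{\sigma\in G}\chi(\sigma^{-1})(x_1x_2)^\sigma=\Bigl(\sum_{\gamma_1\in G_1}\chi_1(\gamma_1^{-1})x_1^{\gamma_1}\Bigr)\Bigl(\sum_{\gamma_2\in G_2}\chi_2(\gamma_2^{-1})x_2^{\gamma_2}\Bigr),
\]
and each factor is nonzero by Lemma \ref{character} applied to $x_k$ in $L_k/K$; as $L_1L_2$ is a field the product is nonzero, and Lemma \ref{character} then yields the normality of $x_1x_2$. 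The main obstacle in either route is the same single point: verifying $[L_1L_2:K]=[L_1:K][L_2:K]$, or equivalently the linear disjointness that makes $G\cong G_1\times G_2$ and forces the character factorization. This is exactly where the hypothesis $L_1\cap L_2=K$ enters, and it is the only place the Galois assumption does real work.
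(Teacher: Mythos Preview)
Your argument is correct. The paper does not actually prove this lemma: its entire proof is a citation to \cite[p.~227]{Kawamoto}. Your first route---identifying $\mathrm{Gal}(L_1L_2/K)$ with $G_1\times G_2$ via restriction and then invoking linear disjointness to see that the products $x_1^{\gamma_1}x_2^{\gamma_2}$ form a $K$-basis of $L_1L_2$---is exactly the standard argument and is presumably what Kawamoto's paper contains; it works in the full generality stated (no abelian hypothesis). Your second route via the character criterion of Lemma~\ref{character} is a pleasant alternative tailored to the abelian situation actually used in the applications, and has the virtue of staying entirely within the toolkit the paper has already set up; note, however, that Lemma~\ref{character} is stated only for abelian extensions, so this argument does not cover the lemma in its stated generality. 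Either way, the only substantive input is the degree equality $[L_1L_2:K]=[L_1:K][L_2:K]$ (equivalently, the product decomposition of the Galois group), and you have correctly identified that this is where $L_1\cap L_2=K$ and the Galois hypothesis are used.
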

\begin{proof}
\cite[p.227]{Kawamoto}.
\end{proof}

\begin{lemma}\label{quadratic}
Let $t=4$ or an odd prime $p$ such that $p\equiv3\pmod{4}$. Then
$\mathbb{Q}(\zeta_t)$ contains a unique quadratic extension of
$\mathbb{Q}$, namely $\mathbb{Q}(\sqrt{-t})$.
\end{lemma}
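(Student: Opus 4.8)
The plan is to treat the two cases $t=4$ and $t=p$ separately. In each case I would first pin down the Galois group of $\mathbb{Q}(\zeta_t)/\mathbb{Q}$ to force the uniqueness of a quadratic subextension, and then exhibit $\sqrt{-t}$ explicitly inside $\mathbb{Q}(\zeta_t)$ to identify that subextension.

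The case $t=4$ is immediate: since $\phi(4)=2$, the field $\mathbb{Q}(\zeta_4)=\mathbb{Q}(i)$ is itself a quadratic extension of $\mathbb{Q}$, hence it is the only quadratic extension it contains, and because $\sqrt{-4}=2i$ we have $\mathbb{Q}(\zeta_4)=\mathbb{Q}(\sqrt{-4})$. For $t=p$ with $p\equiv3\pmod{4}$, I would recall that $\mathrm{Gal}(\mathbb{Q}(\zeta_p)/\mathbb{Q})\simeq(\mathbb{Z}/p\mathbb{Z})^\times$ is cyclic of order $p-1$, and that $2\mid p-1$. A cyclic group possesses exactly one subgroup of each order dividing its order, so it has a unique subgroup of index $2$; by the Galois correspondence this gives a \emph{unique} quadratic extension of $\mathbb{Q}$ inside $\mathbb{Q}(\zeta_p)$. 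This uniqueness step is routine group theory.

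To identify the field, I would invoke the quadratic Gauss sum $g=\sum_{a=1}^{p-1}\left(\frac{a}{p}\right)\zeta_p^{a}$, where $\left(\frac{\cdot}{p}\right)$ denotes the Legendre symbol; clearly $g\in\mathbb{Q}(\zeta_p)$. The classical evaluation of its square,
\begin{equation*}
g^2=\left(\tfrac{-1}{p}\right)p=(-1)^{(p-1)/2}p,
\end{equation*}
combined with the fact that $p\equiv3\pmod{4}$ forces $(-1)^{(p-1)/2}=-1$, yields $g^2=-p$. Thus $\sqrt{-p}\in\mathbb{Q}(\zeta_p)$, so $\mathbb{Q}(\sqrt{-p})$ is a quadratic subfield of $\mathbb{Q}(\zeta_p)$, and by the uniqueness established above it must be the unique one. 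The main obstacle is precisely this explicit identification, namely verifying $g^2=(-1)^{(p-1)/2}p$ (equivalently, that $\sqrt{-p}$ lies in $\mathbb{Q}(\zeta_p)$ with the correct sign); the uniqueness itself is an immediate consequence of the cyclic structure of the Galois group and requires no computation. Alternatively, one could bypass the Gauss sum and argue that the unique quadratic subfield is necessarily ramified only at $p$ and imaginary (since complex conjugation lies outside the index-$2$ subgroup here), which already singles out $\mathbb{Q}(\sqrt{-p})$.
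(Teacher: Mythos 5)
Your proof is correct, but note that the paper offers no argument of its own to compare against: its entire ``proof'' of this lemma is the citation \cite[Theorem 11.1]{Janusz}. What you have written is the standard self-contained argument that such a reference would contain. Your two steps are both sound: for $t=4$ the claim is trivial since $\mathbb{Q}(\zeta_4)=\mathbb{Q}(i)=\mathbb{Q}(\sqrt{-4})$ is itself quadratic; for $t=p\equiv3\pmod{4}$, the cyclicity of $(\mathbb{Z}/p\mathbb{Z})^\times$ gives a unique subgroup of index $2$, hence (by the Galois correspondence) a unique quadratic subfield, and the quadratic Gauss sum $g$ with $g^2=(-1)^{(p-1)/2}p=-p$ places $\sqrt{-p}$ inside $\mathbb{Q}(\zeta_p)$. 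You correctly identify the evaluation of $g^2$ as the one nontrivial input; it is the ``easy'' half of Gauss-sum theory (computing $g^2$ rather than the sign of $g$ itself) and follows in a few lines from multiplicativity of the Legendre symbol, so invoking it as classical is reasonable, though in a fully self-contained write-up you would either prove it or cite it. Your alternative ramification argument is also valid and is closer in spirit to what a class-field-theoretic source like Janusz does: any quadratic subfield of $\mathbb{Q}(\zeta_p)$ is unramified outside $p$, hence has discriminant supported at $p$, and since $-p\equiv1\pmod{4}$ gives $\mathbb{Q}(\sqrt{-p})$ discriminant $-p$ while $\mathbb{Q}(\sqrt{p})$ has discriminant $4p$ (ramified at $2$), the subfield must be $\mathbb{Q}(\sqrt{-p})$. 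Either route fully establishes the lemma; what your argument buys over the paper's citation is self-containedness, at the modest cost of importing the Gauss-sum (or discriminant) computation.
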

\begin{proof}
\cite[Theorem 11.1]{Janusz}.
\end{proof}

\begin{theorem}
Let $t=4$ or an odd prime $p$ such that $p\equiv3\pmod{4}$. Let
$\ell$ \textup{($\neq1,2,3,4,6$)} be a positive integer. If $m$ is
any positive integer such that
\begin{equation}\label{condition}
((\cos(4\pi/t\ell)+1)/(\cos(2\pi/t\ell)+1))^m\leq 2/\phi(t\ell),
\end{equation}
then $(\sqrt{-t}+1)(\cos(2\pi/t\ell)+1)^m$ is normal in
$\mathbb{Q}(\zeta_{t\ell})/\mathbb{Q}$.
\end{theorem}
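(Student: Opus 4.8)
The plan is to realize $\mathbb{Q}(\zeta_{t\ell})$ as a compositum of two fields over which the two factors of our element are separately normal, and then glue the factors together by Lemma \ref{composite}. Concretely, I would set $L_1=\mathbb{Q}(\sqrt{-t})$ and $L_2=\mathbb{Q}(\zeta_{t\ell})^+$, take $x_1=\sqrt{-t}+1$ and $x_2=(\cos(2\pi/t\ell)+1)^m$, and aim to verify the three inputs of Lemma \ref{composite}: that $L_1L_2=\mathbb{Q}(\zeta_{t\ell})$, that $L_1\cap L_2=\mathbb{Q}$, and that $x_k$ is normal in $L_k/\mathbb{Q}$ for $k=1,2$.

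First I would establish the compositum and intersection. By Lemma \ref{quadratic}, $\mathbb{Q}(\sqrt{-t})$ is the unique quadratic subfield of $\mathbb{Q}(\zeta_t)$, and since $t\mid t\ell$ we have $\mathbb{Q}(\sqrt{-t})\subseteq\mathbb{Q}(\zeta_t)\subseteq\mathbb{Q}(\zeta_{t\ell})$. As $t\geq 3$ and $\ell\geq 5$, we have $t\ell\geq 15$, so in particular $t\ell\neq 1,2,3,4,6$ and the extension $\mathbb{Q}(\zeta_{t\ell})/\mathbb{Q}(\zeta_{t\ell})^+$ has degree $2$. Because $\sqrt{-t}$ is purely imaginary while $\mathbb{Q}(\zeta_{t\ell})^+$ is totally real, $\sqrt{-t}\notin\mathbb{Q}(\zeta_{t\ell})^+$; hence $L_1L_2$ strictly contains $L_2$, and being squeezed between $\mathbb{Q}(\zeta_{t\ell})^+$ and $\mathbb{Q}(\zeta_{t\ell})$ it must equal $\mathbb{Q}(\zeta_{t\ell})$. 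The same real-versus-imaginary dichotomy shows $L_1\cap L_2$ is a proper subfield of the quadratic field $\mathbb{Q}(\sqrt{-t})$, so $L_1\cap L_2=\mathbb{Q}$.

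Next I would verify normality of each factor over $\mathbb{Q}$. For $x_1=\sqrt{-t}+1$, writing $\mathrm{Gal}(\mathbb{Q}(\sqrt{-t})/\mathbb{Q})=\{\mathrm{Id},\sigma\}$ with $\sigma(\sqrt{-t})=-\sqrt{-t}$, Lemma \ref{character} requires only that the two character sums $x_1+x_1^{\sigma}=2$ and $x_1-x_1^{\sigma}=2\sqrt{-t}$ be nonzero, which is immediate; note that the summand $1$ is precisely what keeps the trivial-character sum (the trace) away from $0$, so $\sqrt{-t}$ by itself would fail. For $x_2=(\cos(2\pi/t\ell)+1)^m$, I would apply Theorem \ref{maximal} with $\ell$ replaced by $t\ell$: the admissibility $t\ell\neq 1,2,3,4,6$ was checked above, and the hypothesis \eqref{condition} is exactly the hypothesis of Theorem \ref{maximal} with $t\ell$ in place of $\ell$, so $x_2$ is completely normal, hence in particular normal, in $\mathbb{Q}(\zeta_{t\ell})^+/\mathbb{Q}$.

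Finally, Lemma \ref{composite} applied to $L_1,L_2,x_1,x_2$ yields that $x_1x_2=(\sqrt{-t}+1)(\cos(2\pi/t\ell)+1)^m$ is normal in $L_1L_2/\mathbb{Q}=\mathbb{Q}(\zeta_{t\ell})/\mathbb{Q}$, which is the claim. I expect the only delicate point to be the structural identities $L_1L_2=\mathbb{Q}(\zeta_{t\ell})$ and $L_1\cap L_2=\mathbb{Q}$; everything downstream is a direct invocation of the stated results, so the crux is the short degree-and-parity argument of the second paragraph, which relies essentially on Lemma \ref{quadratic} to guarantee $\sqrt{-t}\in\mathbb{Q}(\zeta_t)$ and on the fact that $\mathbb{Q}(\sqrt{-t})$ is imaginary while $\mathbb{Q}(\zeta_{t\ell})^+$ is real.
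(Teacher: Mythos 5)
Your proof is correct and takes essentially the same route as the paper's: normality of $\sqrt{-t}+1$ in $\mathbb{Q}(\sqrt{-t})/\mathbb{Q}$, normality of $(\cos(2\pi/t\ell)+1)^m$ in $\mathbb{Q}(\zeta_{t\ell})^+/\mathbb{Q}$ via Theorem \ref{maximal}, the identities $\mathbb{Q}(\sqrt{-t})\cap\mathbb{Q}(\zeta_{t\ell})^+=\mathbb{Q}$ and $\mathbb{Q}(\sqrt{-t})\mathbb{Q}(\zeta_{t\ell})^+=\mathbb{Q}(\zeta_{t\ell})$ from Lemma \ref{quadratic}, and finally Lemma \ref{composite}. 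The paper states these steps more tersely (asserting normality of $\sqrt{-t}+1$ without the character-sum check via Lemma \ref{character} that you supply), but the argument is the same.
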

\begin{proof}
One can readily show that $\sqrt{-t}+1$ is normal in
$\mathbb{Q}(\sqrt{-t})/\mathbb{Q}$. And, if $m$ is any positive
integer which satisfies the condition (\ref{condition}), then
$(\cos(2\pi/t\ell)+1)^m$ is normal in
$\mathbb{Q}(\zeta_{t\ell})^+/\mathbb{Q}$ by Theorem \ref{maximal}.
On the other hand, since $\mathbb{Q}(\sqrt{-t})$ is an imaginary
quadratic field contained in $\mathbb{Q}(\zeta_t)$
($\subset\mathbb{Q}(\zeta_{t\ell})$) by Lemma \ref{quadratic}, we
have
$\mathbb{Q}(\sqrt{-t})\cap\mathbb{Q}(\zeta_{t\ell})^+=\mathbb{Q}$
and
$\mathbb{Q}(\sqrt{-t})\mathbb{Q}(\zeta_{t\ell})^+=\mathbb{Q}(\zeta_{t\ell})$.
Now, the result follows from Lemma \ref{composite}.
\end{proof}

\section {Fields of modular functions}

Let $\mathbb{H}=\{\tau\in\mathbb{C}~|~\mathrm{Im}(\tau)>0\}$ be the
complex upper half-plane. For a positive integer $N$ we consider the
group
\begin{equation*}
\Gamma_0(N)=\bigg\{\begin{pmatrix}a&b\\c&d\end{pmatrix}\in\mathrm{SL}_2(\mathbb{Z})~|~
c\equiv0\pmod{N}\bigg\},
\end{equation*}
which acts on $\mathbb{H}^*=\mathbb{H}\cup\mathbb{Q}\cup\{\infty\}$
as fractional linear transformations. Then a (meromorphic)
\textit{modular function} for $\Gamma_0(m)$ is a $\mathbb{C}$-valued
function on $\mathbb{H}$, except for isolated singularities, which
satisfies the following three conditions:
\begin{itemize}
\item[(i)] $f(\tau)$ is meromorphic on $\mathbb{H}$,
\item[(ii)] $f(\tau)$ is invariant under $\Gamma_0(N)$,
\item[(iii)] $f(\tau)$ is meromorphic at the cusps
$\mathbb{Q}\cup\{\infty\}$
\end{itemize} \cite[$\S$11 B]{Cox}.
We denote the field of all modular functions for $\Gamma_0(N)$ by
$\mathbb{C}(X_0(N))$. As is well-known, $\mathbb{C}(X_0(N))$ is a
Galois extension of $\mathbb{C}(X_0(1))$ whose Galois group is
isomorphic to the quotient group $\Gamma_0(1)/\Gamma_0(N)$
\cite[Chapter 6]{Lang}.
\par
For a pair $(r_1,r_2)\in\mathbb{Q}^2-\mathbb{Z}^2$, the
\textit{Siegel function} $g_{(r_1,r_2)}(\tau)$ on $\mathbb{H}$ is
defined  by the following infinite product
\begin{eqnarray*}
g_{(r_1,r_2)}(\tau)=-q^{(1/2)\textbf{B}_2(r_1)}e^{\pi
ir_2(r_1-1)}(1-q^{r_1}e^{2\pi
ir_2})\prod_{n=1}^{\infty}(1-q^{n+r_1}e^{2\pi
ir_2})(1-q^{n-r_1}e^{-2\pi ir_2}),
\end{eqnarray*}
where $q=e^{2\pi i\tau}$ and $\textbf{B}_2(X)=X^2-X+1/6$ is the
second Bernoulli polynomial.
\par
For $X\in\mathbb{R}$ we let $\langle X\rangle$ be the fractional
part of $X$ in the interval $[0,1)$.

\begin{lemma}\label{transformation}
Let $N$ \textup{($\geq 2$)} be an integer and
$(r_1,r_2)\in(1/N)\mathbb{Z}^2-\mathbb{Z}^2$.
\begin{itemize}
\item[\textup{(i)}] $g_{(r_1,r_2)}(\tau)^{12N}$ is determined by
$\pm(r_1,r_2)\pmod{\mathbb{Z}^2}$.
\item[\textup{(ii)}] If
$\left(\begin{smallmatrix}a&b\\c&d\end{smallmatrix}\right)\in\mathrm{SL}_2(\mathbb{Z})$,
then
\begin{equation*}
g_{(r_1,r_2)}(\tau)^{12N}\circ\alpha
=g_{(r_1,r_2)\alpha}(\tau)^{12N}=
g_{(r_1a+r_2c,r_1b+r_2d)}(\tau)^{12N}.
\end{equation*}
\item[\textup{(iii)}] $\mathrm{ord}_q~g_{(r_1,r_2)}(\tau)=(1/2)\mathbf{B}_2(\langle
r_1\rangle)$.
\end{itemize}
\end{lemma}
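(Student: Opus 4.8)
The plan is to read all three assertions directly off the infinite-product definition of $g_{(r_1,r_2)}$, deferring to the classical theory of Klein forms only for the genuinely modular statement (ii). I would begin with (i), where it suffices to record how the product transforms under the three generators $(r_1,r_2)\mapsto(r_1+1,r_2)$, $(r_1,r_2)\mapsto(r_1,r_2+1)$ and $(r_1,r_2)\mapsto(-r_1,-r_2)$ of the equivalence $\pm(r_1,r_2)\pmod{\mathbb{Z}^2}$. In each case the two infinite products are merely reindexed and the explicit prefactor $-q^{(1/2)\mathbf{B}_2(r_1)}e^{\pi ir_2(r_1-1)}$ is rescaled; using the identities $\mathbf{B}_2(r_1+1)=\mathbf{B}_2(r_1)+2r_1$ and $\mathbf{B}_2(-r_1)=\mathbf{B}_2(r_1)+2r_1$ together with the telescoping relation $1-q^{-r_1}e^{-2\pi ir_2}=-q^{-r_1}e^{-2\pi ir_2}(1-q^{r_1}e^{2\pi ir_2})$, I expect the clean formulas
\[
g_{(r_1+1,r_2)}=-e^{-\pi ir_2}g_{(r_1,r_2)},\qquad
g_{(r_1,r_2+1)}=e^{\pi i(r_1-1)}g_{(r_1,r_2)},\qquad
g_{(-r_1,-r_2)}=-g_{(r_1,r_2)}.
\]
Each scalar is a root of unity, and since $(r_1,r_2)\in(1/N)\mathbb{Z}^2$ its $12N$-th power equals $1$ (here $12N$ is even and $Nr_1,Nr_2\in\mathbb{Z}$), which gives (i).

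For (iii) I would reuse the first of these relations: because $-e^{-\pi ir_2}$ has $q$-order $0$, shifting $r_1$ by an integer does not change $\mathrm{ord}_q\,g_{(r_1,r_2)}$, so I may assume $0\le r_1<1$, i.e. $r_1=\langle r_1\rangle$. In that range the prefactor contributes order $(1/2)\mathbf{B}_2(r_1)$, while each product factor $1-q^{r_1}e^{2\pi ir_2}$, $1-q^{n+r_1}e^{2\pi ir_2}$ and $1-q^{n-r_1}e^{-2\pi ir_2}$ has constant term $1$ (or, when $r_1=0$, the nonzero constant $1-e^{2\pi ir_2}$ coming from the standalone factor) and hence order $0$. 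Adding the contributions yields $\mathrm{ord}_q\,g_{(r_1,r_2)}=(1/2)\mathbf{B}_2(\langle r_1\rangle)$.

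Part (ii) carries the real content and is the step I expect to be the main obstacle, since it encodes the full $\mathrm{SL}_2(\mathbb{Z})$-transformation law rather than a reindexing of the product. The plan is to factor the Siegel function through the Klein form, writing $g_{(r_1,r_2)}(\tau)=\kappa\,\mathfrak{k}_{(r_1,r_2)}(\tau)\,\eta(\tau)^2$ for a constant $\kappa$ independent of $(r_1,r_2)$ and $\tau$, and then to combine the classical transformation formulas $\mathfrak{k}_{(r_1,r_2)}(\alpha\tau)=(c\tau+d)^{-1}\mathfrak{k}_{(r_1,r_2)\alpha}(\tau)$ and $\eta(\alpha\tau)^2=\varepsilon(\alpha)(c\tau+d)\,\eta(\tau)^2$, where $\varepsilon(\alpha)^{12}=1$. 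The automorphy factors $(c\tau+d)^{\mp1}$ cancel and the universal constant $\kappa$ drops out, leaving $g_{(r_1,r_2)}(\alpha\tau)=\varepsilon(\alpha)\,g_{(r_1,r_2)\alpha}(\tau)$ with $(r_1,r_2)\alpha=(r_1a+r_2c,r_1b+r_2d)$; raising to the power $12N$ annihilates $\varepsilon(\alpha)$ and produces the asserted chain of equalities. The delicate inputs to verify are the precise twelfth root of unity in the squared $\eta$-multiplier and the compatibility of the normalization $\kappa$ in the Klein-form/Siegel-function identity; these are exactly the standard facts (e.g. Kubert--Lang) that cannot be extracted from the elementary product manipulations used in (i) and (iii).
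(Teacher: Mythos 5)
Your proposal is correct, but it takes a genuinely different route from the paper: the paper's entire proof of this lemma is the citation \cite[Chapter 2 \S 1]{K-L}, whereas you reconstruct the content. Your computations check out. The three scalar relations are exactly right --- the telescoping identity $1-q^{-r_1}e^{-2\pi ir_2}=-q^{-r_1}e^{-2\pi ir_2}(1-q^{r_1}e^{2\pi ir_2})$ together with $\mathbf{B}_2(r_1+1)=\mathbf{B}_2(-r_1)=\mathbf{B}_2(r_1)+2r_1$ yields $g_{(r_1+1,r_2)}=-e^{-\pi ir_2}g_{(r_1,r_2)}$, $g_{(r_1,r_2+1)}=e^{\pi i(r_1-1)}g_{(r_1,r_2)}$ and $g_{(-r_1,-r_2)}=-g_{(r_1,r_2)}$ --- and each scalar's $12N$-th power is $1$ precisely because $Nr_1,Nr_2\in\mathbb{Z}$ and $12N$ is even, which gives (i). Your reduction of (iii) to $0\le r_1<1$ via the first relation is sound, including the boundary case $r_1=0$, where the standalone factor $1-e^{2\pi ir_2}$ is a nonzero constant because $(r_1,r_2)\notin\mathbb{Z}^2$ forces $r_2\notin\mathbb{Z}$. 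For (ii), the factorization $g_a=\kappa\,\mathfrak{k}_a\eta^2$ and the two laws $\mathfrak{k}_a(\alpha\tau)=(c\tau+d)^{-1}\mathfrak{k}_{a\alpha}(\tau)$ and $\eta(\alpha\tau)^2=\varepsilon(\alpha)(c\tau+d)\eta(\tau)^2$ with $\varepsilon(\alpha)^{12}=1$ do exactly what you say: the automorphy factors cancel, $\kappa$ drops out, and the $12N$-th power annihilates $\varepsilon(\alpha)$, giving $g_{(r_1,r_2)}^{12N}\circ\alpha=g_{(r_1a+r_2c,\,r_1b+r_2d)}^{12N}$. Since those two inputs are themselves the Kubert--Lang facts the paper cites, your argument is not circular; it is an unpacking of the citation. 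What the paper's route buys is brevity; what yours buys is that (i) and (iii) become elementary reindexings of the $q$-product, isolating (ii) as the only place where the classical theory of Klein forms and the $\eta$-multiplier is actually needed.
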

\begin{proof}
\cite[Chapter 2 $\S$1]{K-L}.
\end{proof}

Let
\begin{equation*}
\Delta(\tau)=(2\pi)^{12}q\prod_{n=1}^\infty(1-q^n)^{24}\quad(\tau\in\mathbb{H})
\end{equation*}
be the \textit{modular discriminant function}.

\begin{lemma}\label{product}
We have the relation
\begin{equation*}
\Delta(\tau)/\Delta(N\tau)=N^{12}\prod_{k=1}^Ng_{(0,k/N)}(\tau)^{-12},
\end{equation*}
which is a modular function for $\Gamma_0(N)$.
\end{lemma}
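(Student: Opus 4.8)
The plan is to establish the identity by a direct comparison of infinite ($q$-product) expansions, and then to deduce the modular-function claim from the weight-$12$ transformation behaviour of $\Delta$. First I would insert $(r_1,r_2)=(0,k/N)$ into the product defining $g_{(r_1,r_2)}$. Because $\mathbf{B}_2(0)=1/6$ and $e^{\pi i r_2(r_1-1)}=e^{-\pi i k/N}$ when $r_1=0$, passing to the twelfth power clears the fractional exponents and gives, writing $\zeta=\zeta_N$,
\[
g_{(0,k/N)}(\tau)^{12}=q\,\zeta^{-6k}(1-\zeta^{k})^{12}\prod_{n=1}^{\infty}\bigl((1-q^{n}\zeta^{k})(1-q^{n}\zeta^{-k})\bigr)^{12}.
\]
Here the factor for $k$ a multiple of $N$ is degenerate (it corresponds to the trivial root of unity and a vanishing Siegel function), so the meaningful product runs over the nontrivial $N$th roots of unity, i.e.\ over $k=1,\dots,N-1$.

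Next I would multiply over $k$ and treat the four kinds of factors separately. The powers of $q$ contribute $q^{N-1}$. The phase factors contribute $\prod_{k=1}^{N-1}\zeta^{-6k}=\zeta^{-3N(N-1)}=1$. The constant factors collapse by the cyclotomic identity $\prod_{k=1}^{N-1}(1-\zeta^{k})=N$, giving $N^{12}$. Finally, for each fixed $n$ the identity $\prod_{\omega^{N}=1}(1-q^{n}\omega)=1-q^{nN}$ yields $\prod_{k=1}^{N-1}(1-q^{n}\zeta^{k})=\prod_{k=1}^{N-1}(1-q^{n}\zeta^{-k})=(1-q^{nN})/(1-q^{n})$, so the infinite product becomes $\prod_{n=1}^{\infty}\bigl((1-q^{nN})/(1-q^{n})\bigr)^{24}$. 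Assembling these, taking the reciprocal (since the exponent is $-12$), and multiplying by $N^{12}$ produces $q^{1-N}\prod_{n=1}^{\infty}\bigl((1-q^{n})/(1-q^{nN})\bigr)^{24}$, which is exactly $\Delta(\tau)/\Delta(N\tau)$ once one substitutes $q\mapsto q^{N}$ in the product defining $\Delta(N\tau)$ and cancels the common $(2\pi)^{12}$.

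For the modular-function assertion I would argue on the quotient itself rather than on the Siegel product. Since $\Delta$ is a modular form of weight $12$ for $\mathrm{SL}_2(\mathbb{Z})$, for $\gamma=\left(\begin{smallmatrix}a&b\\c&d\end{smallmatrix}\right)\in\Gamma_0(N)$ one has $\Delta(\gamma\tau)=(c\tau+d)^{12}\Delta(\tau)$, while conjugating $\gamma$ by $\left(\begin{smallmatrix}N&0\\0&1\end{smallmatrix}\right)$ produces a matrix in $\mathrm{SL}_2(\mathbb{Z})$ (here $N\mid c$ is used) which shows $\Delta(N\gamma\tau)=(c\tau+d)^{12}\Delta(N\tau)$. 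Hence the automorphy factors cancel and $\Delta(\tau)/\Delta(N\tau)$ is $\Gamma_0(N)$-invariant; it is holomorphic on $\mathbb{H}$ because $\Delta$ is holomorphic and nowhere vanishing there, and it is meromorphic at every cusp because both numerator and denominator possess $q$-expansions, which one can also read off from the order formula in Lemma \ref{transformation}(iii). This verifies conditions (i)--(iii).

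I expect the main obstacle to be the cyclotomic bookkeeping in the middle step: correctly isolating and discarding the degenerate $\omega=1$ factor, checking that the phase factors $\zeta^{-6k}$ multiply to $1$, and confirming that the two infinite subproducts over $\zeta^{k}$ and $\zeta^{-k}$ combine into a perfect square of $\eta$-type quotients. Once these collapses are in hand, matching the remaining $q$-power and product against $\Delta(\tau)/\Delta(N\tau)$ is routine.
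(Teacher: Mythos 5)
Your computation is correct, but note that the paper itself offers no argument here at all: its entire ``proof'' of Lemma \ref{product} is the citation \cite[Proposition 5.1]{K-S}. So your proposal is a self-contained replacement rather than a parallel of anything in the text. The route you take --- expanding $g_{(0,k/N)}(\tau)^{12}$ from the defining product, collapsing the phases via $\sum_{k=1}^{N-1}6k=3N(N-1)$, the constant factors via $\prod_{k=1}^{N-1}(1-\zeta^{k})=N$, the $n$-th factors via $\prod_{\omega^{N}=1}(1-q^{n}\omega)=1-q^{nN}$, and then deducing $\Gamma_0(N)$-invariance from the weight-$12$ transformation of $\Delta$ together with the conjugation $\left(\begin{smallmatrix}a&b\\c&d\end{smallmatrix}\right)\mapsto\left(\begin{smallmatrix}a&Nb\\c/N&d\end{smallmatrix}\right)$ --- uses only objects already defined in the paper, which is a real gain over an external citation. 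Your handling of the degenerate factor also quietly corrects a typo in the statement: as written the product runs to $k=N$, but $(0,N/N)=(0,1)\in\mathbb{Z}^2$ is excluded from the domain of the Siegel functions, and indeed when the paper applies the lemma in the proof of Theorem \ref{modular} the product runs only to $k=N-1$, exactly as in your computation.

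Two points to tighten. First, the interchange of the finite product over $k$ with the infinite product over $n$ should be justified by absolute convergence for $|q|<1$ (routine). Second, your cusp argument as phrased --- ``both numerator and denominator possess $q$-expansions'' --- only certifies meromorphy at the cusp $\infty$; an expansion at $\infty$ says nothing by itself about the other cusps. To close this, compose with an arbitrary $\alpha\in\mathrm{SL}_2(\mathbb{Z})$, use Lemma \ref{transformation}(ii) to rewrite $\bigl(\Delta(\tau)/\Delta(N\tau)\bigr)^{N}\circ\alpha$ as $N^{12N}\prod_{k=1}^{N-1}g_{(0,k/N)\alpha}(\tau)^{-12N}$, and apply the order formula of Lemma \ref{transformation}(iii) to see that this power has a fractional $q$-expansion of finite order; since meromorphy of the $N$-th power of a function holomorphic near the cusp implies meromorphy of the function itself, conditions (i)--(iii) all hold. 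This is a refinement of what you wrote, not a change of method.
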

\begin{proof}
\cite[Proposition 5.1]{K-S}.
\end{proof}

\begin{theorem}\label{modular}
Let $N$ \textup{($\geq2$)} be an integer. Let $L=\mathbb{C}(X_0(N))$
and $K$ be the subfield of $L$ fixed by
$\left(\begin{smallmatrix}1&0\\1&1\end{smallmatrix}\right)$ in
$\Gamma_0(1)/\Gamma_0(N)$. Then, any positive power of
$\Delta(\tau)/\Delta(N\tau)$ is completely normal in $L/K$.
\end{theorem}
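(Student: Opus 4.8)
The plan is to apply the nonarchimedean clause of Theorem \ref{criterion} to the element $x=\Delta(\tau)/\Delta(N\tau)$, taking as valuation the order of the $q$-expansion at the cusp $\infty$. First I would pin down the Galois group: since $\sigma^k=\left(\begin{smallmatrix}1&0\\k&1\end{smallmatrix}\right)$ lies in $\Gamma_0(N)$ exactly when $N\mid k$, the matrix $\sigma=\left(\begin{smallmatrix}1&0\\1&1\end{smallmatrix}\right)$ has order $N$ modulo $\Gamma_0(N)$, so $G=\mathrm{Gal}(L/K)=\langle\sigma\rangle$ is cyclic of order $n=N$, hence abelian and Theorem \ref{criterion} applies. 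For the valuation I would fix a real number $c$ with $0<c<1$ and set $|f|=c^{\,\mathrm{ord}_q f}$; this is nonarchimedean, and with it the hypothesis $|x^{\gamma}/x|<1$ becomes simply $\mathrm{ord}_q(x^{\gamma})>\mathrm{ord}_q(x)$.

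Next I would compute these orders. By Lemma \ref{product} we may write $x=N^{12}\prod_{k}g_{(0,\,k/N)}(\tau)^{-12}$, and Lemma \ref{transformation}(ii) gives $g_{(0,\,k/N)}^{12N}\circ\sigma^{j}=g_{(0,\,k/N)\sigma^{j}}^{12N}=g_{(kj/N,\,k/N)}^{12N}$, so $g_{(0,\,k/N)}\circ\sigma^{j}$ and $g_{(kj/N,\,k/N)}$ differ only by a root of unity and therefore share the same $q$-order. Combining this with Lemma \ref{transformation}(iii), where $x^{\sigma^{j}}=x\circ\sigma^{j}$, yields
\begin{equation*}
\mathrm{ord}_q(x\circ\sigma^{j})=-6\sum_{k}\mathbf{B}_2(\langle kj/N\rangle),\qquad \mathrm{ord}_q(x)=-6\sum_{k}\mathbf{B}_2(0).
\end{equation*}
Thus the required inequality $\mathrm{ord}_q(x\circ\sigma^{j})>\mathrm{ord}_q(x)$ for $j=1,\dots,N-1$ is equivalent to $\sum_{k}\mathbf{B}_2(\langle kj/N\rangle)<\sum_{k}\mathbf{B}_2(0)$. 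Since $\mathbf{B}_2(u)=u^{2}-u+1/6$ attains its maximum $1/6=\mathbf{B}_2(0)$ on $[0,1)$ only at $u=0$, every summand satisfies $\mathbf{B}_2(\langle kj/N\rangle)\le 1/6$, with equality precisely when $N\mid kj$; the term $k=1$ already violates this because $1\le j\le N-1$, so the inequality is strict. Hence $|x^{\sigma^{j}}/x|<1$ for every $\sigma^{j}\neq\mathrm{Id}$, and the nonarchimedean part of Theorem \ref{criterion} shows that every positive power of $x$ is completely normal in $L/K$.

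I expect the only delicate points to be bookkeeping rather than any new idea: one must track the exact range of $k$ in Lemma \ref{product} (the degenerate index $k\equiv0\pmod N$ contributes equally to $\mathrm{ord}_q(x\circ\sigma^{j})$ and $\mathrm{ord}_q(x)$, and so is harmless in the comparison), confirm that the cyclic group $\langle\sigma\rangle$ acts on $L$ consistently with the stated Galois correspondence, and verify the elementary but decisive fact that $\mathbf{B}_2(\langle\cdot\rangle)$ separates integer from non-integer arguments. Once these are settled, the conclusion follows at once from Theorem \ref{criterion}.
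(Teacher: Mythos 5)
Your proof is correct and follows essentially the same route as the paper: the same nonarchimedean valuation $|f|=c^{\,\mathrm{ord}_q f}$ (the paper takes $c=e^{-1}$), the same use of Lemmas \ref{product} and \ref{transformation} to reduce everything to the inequality $\sum_k\mathbf{B}_2(\langle kj/N\rangle)<\sum_k\mathbf{B}_2(0)$, and the same appeal to the nonarchimedean clause of Theorem \ref{criterion}. The only cosmetic difference is that you compare $q$-orders of the individual Siegel functions directly (noting they are determined up to roots of unity), whereas the paper raises $|x^\gamma/x|$ to the $N$th power so that Lemma \ref{transformation} applies verbatim to the $12N$th powers.
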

\begin{proof}
By Galois theory we have
\begin{eqnarray*}
\mathrm{Gal}(L/K)&\simeq&\bigg\langle
\begin{pmatrix}
1&0\\1&1
\end{pmatrix} \bigg\rangle~\textrm{in}~\Gamma_0(1)/\Gamma_0(N)\\
&=&\bigg\{\begin{pmatrix}1&0\\t&1\end{pmatrix}~|~t=0,1,\cdots,
N-1\bigg\}~\textrm{in}~\Gamma_0(1)/\Gamma_0(N).
\end{eqnarray*}
Consider the nonarchimedean valuation $|\cdot|$ on $L$ defined by
\begin{eqnarray*}
|\cdot|~:~L&\longrightarrow&\mathbb{R}_{\geq0}\\
\alpha&\mapsto& |\alpha|=\exp(-\mathrm{ord}_{q}~\alpha).
\end{eqnarray*}
Let $x=\Delta(\tau)/\Delta(N\tau)$. For any
$\gamma=\left(\begin{smallmatrix}1&0\\t&1\end{smallmatrix}\right)\in
\mathrm{Gal}(L/K)-\{\mathrm{Id}\}$ we find that
\begin{eqnarray*}
|x^\gamma/x|^N&=&\bigg|(N^{12N}\prod_{k=1}^{N-1}g_{(0,k/N)}(\tau)^{-12N})^\gamma
/N^{12N}\prod_{k=1}^{N-1}g_{(0,k/N)}(\tau)^{-12N}\bigg|\quad\textrm{by Lemma \ref{product}}\\
&=&\bigg|\prod_{k=1}^{N-1}g_{(kt/N,k/N)}(\tau)^{-12N}/\prod_{k=1}^{N-1}g_{(0,k/N)}(\tau)^{-12N}\bigg|
\quad\textrm{by Lemma \ref{transformation}(i) and (ii)}\\&=&
\exp\bigg(-\sum_{k=1}^{N-1}(1/2)\mathbf{B}_2(\langle
kt/N\rangle)\cdot(-12N)+\sum_{k=1}^{N-1}(1/2)\mathbf{B}_2(0)\cdot(-12N)\bigg)\\
&&\textrm{by Lemma \ref{transformation}(iii)}\\
&=&\exp\bigg(6N\sum_{k=1}^{N-1}(\mathbf{B}_2(\langle
kt/N\rangle)-\mathbf{B}_2(0))\bigg)\\
&<&1\quad\textrm{because $\mathbf{B}_2(X)$ has its maximum at $X=0$
in the interval $[0,1)$},
\end{eqnarray*}
from which it follows that $|x^\gamma/x|<1$. Therefore $x$ is
completely normal in $L/K$ by Theorem \ref{criterion}.
\end{proof}

\bibliographystyle{amsplain}

\address{% First Author
Department of Mathematical Sciences \\
KAIST \\
Daejeon 373-1 \\
Korea 305-701} {jkkoo@math.kaist.ac.kr}
%%%%%%%%%
\address{% Corresponding Author
Department of Mathematics and Statistics \\
McGill University \\
805 Sherbrooke St. West\\
Montreal, Quebec\\
Canada H3A 2K6  } {dong.hwa.shin@mail.mcgill.ca}

\end{document}